\documentclass{amsart}
\usepackage{amsmath,amssymb,amsthm,fullpage}

\newtheorem{theorem}{Theorem}
\newtheorem{lemma}[theorem]{Lemma}

\newtheorem{remark}[theorem]{Remark}

\title{A Fourier-analytic Uniqueness Theorem for Lattice-point Enumerators}
\author{António Rocha-Neves}
\email{arochaneves@proton.me}
\date{\today}

\begin{document}
	
	\begin{abstract}
		We consider a bounded set $P \subset \mathbb{R}^d$ and the lattice-point enumerator $L_P(t) = |tP \cap \mathbb{Z}^d|$ for real $t > 0$. We show that if two bounded measurable sets with boundary of measure zero have the same real-parameter lattice-point enumerators for all integer translates, then their indicator functions agree almost everywhere. As a corollary, any convex body is uniquely determined by this data. Our proof is short and Fourier-analytic, where the key device is a periodic point-counting function whose Fourier coefficients recover the Fourier transform of the indicator function on a dense set. This recovers and extends, with a unified argument, the uniqueness results for rational polytopes and symmetric convex bodies established by Royer \cite{RoyerReconstruction, RoyerSymmetric}, whose proofs relied on intricate case-specific geometric constructions.
	\end{abstract}
	
	\maketitle
	
	\section{Introduction and previous work}
	
	Let $P \subset \mathbb{R}^d$ be a bounded set. For each real $t>0$, we define the real-parameter lattice-point enumerator as
	$$
	L_P(t) := |tP \cap \mathbb{Z}^d|.
	$$
	When $P$ is a rational polytope and $t$ is restricted to positive integers, $L_P(t)$ is the classical Ehrhart quasi-polynomial, which is invariant under integer translations. However, this translational invariance fails when $t$ is allowed to vary continuously over all positive real numbers. The resulting family of functions,
	$$
	t \mapsto L_{P+w}(t), \qquad w\in\mathbb{Z}^d,
	$$
	encodes significantly more geometric information about the set $P$.
	
	The uniqueness of sets determined by this family of functions was first studied by Royer. In two extensive preprints \cite{RoyerReconstruction, RoyerSymmetric}, Royer showed that for rational polytopes and symmetric convex bodies, the equality $L_{P+w}(t) = L_{Q+w}(t)$ for all $w\in\mathbb{Z}^d$ and all $t>0$ implies $P=Q$.
	
	Royer's original methods rely on intricate geometric constructions tailored specifically to rational polytopes and symmetric convex bodies. Consequently, the extension of this uniqueness property to arbitrary real polytopes and to all convex bodies remained open, corresponding to Conjecture 18 in \cite{RoyerReconstruction} and Conjecture 9 in \cite{RoyerSymmetric}, respectively.
	
	The purpose of this short note is to prove both conjectures extending the uniqueness result to all convex bodies, while bypassing the combinatorial complexity of previous methods. Inspired by the continuous Fourier-analytic approach to lattice-point enumeration developed in Robins' book \cite{RobinsBook}, we shift the perspective to the Fourier domain. However, while approaches in this area often rely on the Poisson summation formula, applying this directly to the indicator functions of arbitrary convex bodies requires distributional arguments. We avoid this by defining a periodic point-counting function, which allows us to work within $L^1$ and cleanly obtain the following:
	
	\begin{itemize}
		\item If $P,Q\subset\mathbb{R}^d$ are bounded measurable sets with boundaries of Lebesgue measure zero and $L_{P+w}(t) = L_{Q+w}(t)$ for all $w\in\mathbb{Z}^d$ and all $t>0$, then $1_P = 1_Q$ almost everywhere.
		\item If, in addition, $P$ and $Q$ are convex bodies, then $P=Q$.
	\end{itemize}
	
	In particular, we recover and extend Royer's uniqueness theorems to all convex bodies.
	
	In a similar direction, Higashitani, Murai, and Yoshinaga \cite{Higashitani2025} recently studied the reconstruction of rational polytopes from standard Ehrhart quasi-polynomials (where the dilation parameter $t$ is restricted to integers) evaluated over dense sets of translations, a result they note was previously obtained by Alhajjar. In contrast, by allowing $t$ to vary continuously over the positive reals, we are able to strictly restrict our translations to the discrete integer lattice $\mathbb{Z}^d$, while simultaneously extending the uniqueness property to all convex bodies.
	
	\section{Main analytic theorem}
	
	Throughout, for $f\in L^1(\mathbb{R}^d)$ we use the Fourier transform
	$$
	\widehat{f}(\xi) := \int_{\mathbb{R}^d} f(x)\,e^{-2\pi i\langle \xi,x\rangle}\,dx,
	\qquad \xi\in\mathbb{R}^d.
	$$
	If $t>0$ and $P\subset\mathbb{R}^d$ is measurable, the scaling rule
	$
		\widehat{1_{tP}}(\xi) = t^d \,\widehat{1_P}(t\xi)
	$
	follows from the change of variables $x := ty$.
	
	For any bounded measurable set $P\subset\mathbb{R}^d$ and $t \in \mathbb{R}_{>0}$, define $F_{t, P}: \mathbb{R}^d \rightarrow \mathbb{R}$ by
	$$
	F_{t,P}(x) := \sum_{n\in\mathbb{Z}^d} 1_{tP}(n-x)
	=
	|(tP + x)\cap\mathbb{Z}^d|.
	$$
	
	\begin{lemma}\label{lem:Ft-basic}
		Let $P\subset\mathbb{R}^d$ be a bounded measurable set such that $\lambda(\partial P)=0$, and let $t \in \mathbb{R}_{>0}$.
		Then:
		\begin{enumerate}
			\item $F_{t,P}$ is well-defined and $\mathbb{Z}^d$-periodic.
			\item The discontinuity set of $F_{t,P}$ is contained in
			$$
			D_t(P) := \bigcup_{n\in\mathbb{Z}^d} \bigl(n - \partial(tP)\bigr),
			$$
			which is closed and has Lebesgue measure zero.
			\item $F_{t,P}$ is constant on each connected component of $\mathbb{R}^d \setminus D_t(P)$.
		\end{enumerate}
	\end{lemma}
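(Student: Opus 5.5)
Since $P$ is bounded, say $P\subset B(0,R)$, we have $tP\subset B(0,tR)$. For fixed $x$, the condition $n-x\in tP$ forces $|n-x|<tR$. Only finitely many $n\in\mathbb{Z}^d$ satisfy this, so the sum defining $F_{t,P}(x)$ is a finite nonnegative integer and equals $|(tP+x)\cap\mathbb{Z}^d|$. For periodicity, take $m\in\mathbb{Z}^d$: then $F_{t,P}(x+m)=\sum_n 1_{tP}(n-m-x)$, and reindexing $n\mapsto n+m$ gives $F_{t,P}(x)$. This proves (1).

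For (2) and (3), the key observation is local. For a fixed $x_0\notin D_t(P)$, each point $n-x_0$ lies off $\partial(tP)$, so it lies either in the interior of $tP$ or in the interior of the complement. Hence $1_{tP}(n-\cdot)$ is locally constant near $x_0$ for each single $n$. To pass to the sum, restrict to the finitely many $n$ with $|n-x_0|<tR+1$. In a ball $B(x_0,\delta)$ with $\delta<1$, every other $n$ contributes $0$ throughout. Take $\delta$ smaller than each of the finitely many radii of local constancy; then $F_{t,P}$ is constant on $B(x_0,\delta)$. So $F_{t,P}$ is locally constant, in particular continuous, on $\mathbb{R}^d\setminus D_t(P)$, which gives the discontinuity claim in (2). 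Statement (3) then follows from the standard fact that a locally constant function on a connected set is constant: the set where it equals a given value is open and closed in the component. This uses that the complement of $D_t(P)$ is open, i.e.\ that $D_t(P)$ is closed, so its components are open.

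The main technical point is the closedness and null-measure of $D_t(P)$. First, $\partial(tP)=t\,\partial P$ is closed and bounded, hence compact, and lies in $\overline{B(0,tR)}$. So the family $\{n-\partial(tP)\}_{n\in\mathbb{Z}^d}$ is locally finite: any bounded ball meets only finitely many translates. A locally finite union of closed sets is closed, since near any point it is locally a finite union. For the measure, dilation gives $\lambda(t\,\partial P)=t^d\lambda(\partial P)=0$. Reflection and translation preserve Lebesgue measure, so each piece $n-\partial(tP)$ is null, and a countable union of null sets is null. I expect no real obstacle beyond this local-finiteness step, which handles both closedness here and the reduction to finitely many $n$ in the local-constancy argument.
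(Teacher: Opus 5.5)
Your proposal is correct and follows essentially the same route as the paper: reindexing for periodicity, local finiteness of the translates $n-\partial(tP)$ for closedness, dilation plus countable subadditivity for the null-measure claim, and integer-valued local constancy on components for (3). Your local argument is more explicit than the paper's. It uses that only finitely many $n$ matter near $x_0$, and that each such $n-x_0$ lies in the interior or the exterior of $tP$. This supplies the justification for the inclusion of the discontinuity set in $D_t(P)$, which the paper simply asserts.
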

	
	\begin{proof}
		\mbox{}
		\begin{enumerate}
		\item Boundedness of $P$ implies $tP$ is contained in a ball of finite radius, so
		for each fixed $x$ the set $(tP+x)\cap\mathbb{Z}^d$ is finite. Thus the sum
		defining $F_{t,P}(x)$ has finitely many nonzero terms and is well-defined.
		
		For periodicity, if $m\in\mathbb{Z}^d$ we have
		$$
		F_{t,P}(x+m)
		=
		\sum_{n\in\mathbb{Z}^d} 1_{tP}(n-(x+m))
		=
		\sum_{n'\in\mathbb{Z}^d} 1_{tP}(n'-x)
		=
		F_{t,P}(x),
		$$
		where $n':=n-m$.
		
		\item If $F_{t,P}$ is discontinuous at $x$, then there exists $n\in\mathbb{Z}^d$ such that $x\in n-\partial(tP)$. Thus the discontinuity set is contained in
		$$
		D_t(P) := \bigcup_{n\in\mathbb{Z}^d} (n - \partial(tP)).
		$$
		Noting that $P$ is bounded, we see that this is a locally finite union of closed sets, hence $D_t(P)$ is closed.
		Since $\lambda(\partial P)=0$, we have
		$\lambda(\partial(tP))=0$, and since the union is countable we conclude that $\lambda(D_t(P))=0$.
		
		\item On $U := \mathbb{R}^d \setminus D_t(P)$ the function $F_{t,P}$ is continuous
		by definition of $D_t(P)$. Since $F_{t,P}$ takes values in $\mathbb{Z}_{\ge 0}$,
		it is an integer-valued continuous function on each connected component of $U$,
		and therefore must be constant on each such component.
		\end{enumerate}
	\end{proof}
	
	With the periodicity and almost-everywhere continuity of $F_{t,P}$ established, we can now formulate our main analytic result.
	
	\begin{theorem}\label{thm:analytic}
		Let $P,Q\subset\mathbb{R}^d$ be bounded measurable sets such that
		$\lambda(\partial P)=\lambda(\partial Q)=0$.
		Assume
		\begin{equation}\label{eq:hypothesis}
			L_{P+w}(t) = L_{Q+w}(t)
			\quad\text{for all } w\in\mathbb{Z}^d,\ t \in \mathbb{R}_{>0}.
		\end{equation}
		Then $1_P = 1_Q$ almost everywhere.
	\end{theorem}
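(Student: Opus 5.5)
Since $F_{t,P}$ is $\mathbb{Z}^d$-periodic and bounded (Lemma~\ref{lem:Ft-basic}), it lies in $L^1(\mathbb{T}^d)$ with $\mathbb{T}^d=\mathbb{R}^d/\mathbb{Z}^d$, and we may compute its Fourier coefficients. For $k\in\mathbb{Z}^d$, unfolding the sum over $n$ against the integral over $[0,1)^d$ (Tonelli/Fubini is justified since only finitely many terms are nonzero and everything is bounded) gives
\begin{equation*}
\int_{[0,1)^d} F_{t,P}(x)e^{-2\pi i\langle k,x\rangle}\,dx
=\int_{\mathbb{R}^d} 1_{tP}(-y)e^{2\pi i\langle k,y\rangle}\,dy
=\widehat{1_{tP}}(k)=t^d\,\widehat{1_P}(tk),
\end{equation*}
using $e^{-2\pi i\langle k,n\rangle}=1$ for the substitution $y=x-n$. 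So it suffices to show $F_{t,P}=F_{t,Q}$ almost everywhere for every $t>0$. Then $\widehat{1_P}(tk)=\widehat{1_Q}(tk)$ for all $t>0$, $k\in\mathbb{Z}^d$. The set $\{tk\}$ is all of $\mathbb{R}^d$ after including $k=0$ and varying $t$: every $\xi$ with rational direction is of this form, and such $\xi$ are dense. Both transforms are continuous since $1_P,1_Q\in L^1$, so $\widehat{1_P}=\widehat{1_Q}$ everywhere, and Fourier uniqueness in $L^1$ yields $1_P=1_Q$ a.e.

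The remaining step, which I expect to be the main obstacle, is to derive $F_{t,P}=F_{t,Q}$ a.e.\ from the hypothesis. The hypothesis only gives values at special points. Indeed,
\begin{equation*}
L_{P+w}(s)=|(sP+sw)\cap\mathbb{Z}^d|=F_{s,P}(sw).
\end{equation*}
Fix $t$ and a point $x\notin D_t(P)\cup D_t(Q)$, an open set of full measure. I will try to approximate $x$ modulo $\mathbb{Z}^d$ by points $sw$ with $s$ near $t$. The relation $F_{s,P}(sw)=F_{t,P}(\tfrac{t}{s}\cdot sw)$ is false in general, so instead I write
\begin{equation*}
F_{s,P}(y)=|(sP+y)\cap\mathbb{Z}^d|,
\end{equation*}
and use stability: if $x$ lies off $D_t(P)$, then no lattice point lies on $\partial(tP)+x$. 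By compactness and finiteness, each lattice point $n$ near $tP+x$ has a positive distance from $\partial(tP)+x$. Hence for $(s,y)$ close to $(t,x)$ the count $|(sP+y)\cap\mathbb{Z}^d|$ equals $|(tP+x)\cap\mathbb{Z}^d|$. This needs a small argument that $sP+y$ is a small perturbation of $tP+x$ near the relevant finitely many lattice points: a point $n$ in the interior of $tP+x$ stays in the interior, and a point in the exterior stays exterior, for nearby $(s,y)$, since $(n-y)/s\to(n-x)/t$ and $P$'s interior and closure complement are open. So $(s,y)\mapsto F_{s,P}(y)$ is locally constant near $(t,x)$, and likewise for $Q$.

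It then remains to find $s$ arbitrarily close to $t$ and $w\in\mathbb{Z}^d$ with $sw$ arbitrarily close to $x+m$ for some $m\in\mathbb{Z}^d$; periodicity of $F_{s,\cdot}$ in $y$ lets us absorb $m$. Take $w$ large with $w/|w|$ close to the direction of $x+m$ and $s=|x+m|/|w|$. This forces $s$ small, not near $t$. So choose $m$ huge instead: pick $w\in\mathbb{Z}^d$ and $m\in\mathbb{Z}^d$ with $tw-m$ close to $x$. This is possible when $t$ is irrational, or by perturbing $t$ to a nearby $s$: by Kronecker/Weyl, for generic $s$ the points $sw \bmod \mathbb{Z}^d$, $w\in\mathbb{Z}^d$, are dense in $\mathbb{T}^d$ (take $w=je_1$ coordinatewise, since $js$ mod 1 is dense when $s$ is irrational). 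Hence choose irrational $s$ near $t$ and $w$ with $sw\equiv y$ close to $x$ mod $\mathbb{Z}^d$. Then
\begin{equation*}
F_{t,P}(x)=F_{s,P}(sw)=L_{P+w}(s)=L_{Q+w}(s)=F_{s,Q}(sw)=F_{t,Q}(x),
\end{equation*}
which completes the plan.
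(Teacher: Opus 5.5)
Your proof is correct and is essentially the paper's argument. You use the periodic counting function $F_{t,P}$, whose Fourier coefficients are $t^d\widehat{1_P}(\pm tk)$, show it agrees with $F_{t,Q}$ off $D_t(P)\cup D_t(Q)$ via local constancy and the density of $\{sw+m : w,m\in\mathbb{Z}^d\}$ for irrational $s$, and finish with continuity of the Fourier transform and $L^1$ uniqueness. There are two small differences from the paper. First, you prove joint local constancy in $(s,y)$ so that rational $t$ is covered too. The paper avoids this by working only with irrational $t$ (taking $s=t$), since the frequencies $t\xi$ with $t\notin\mathbb{Q}$, $\xi\in\mathbb{Z}^d$ are already dense. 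Second, your unfolding has a harmless sign slip: the coefficient is $\widehat{1_{tP}}(-k)$, not $\widehat{1_{tP}}(k)$, which does not matter because $k$ ranges over all of $\mathbb{Z}^d$.
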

	
	\begin{proof}
		By the definition of the lattice point enumerator and $F_{t,P}$ we have
		$$
		F_{t,P}(tw)=|(tP+tw) \cap \mathbb{Z}^d|=L_{P+w}(t) \quad\text{for all } w\in\mathbb{Z}^d,\ t>0.
		$$
		
		Therefore, \eqref{eq:hypothesis} is equivalent to
		$$
		F_{t,P}(tw) = F_{t,Q}(tw)
		\quad\text{for all } w\in\mathbb{Z}^d,\ t>0.
		$$
		
		Fix an irrational $t>0$. For any $w,m\in\mathbb{Z}^d$, using periodicity we have
		$$
		F_{t,P}(tw+m) = F_{t,P}(tw) = F_{t,Q}(tw) = F_{t,Q}(tw+m).
		$$
		
		Thus $F_{t,P}$ and $F_{t,Q}$ agree on
		$$
		\Lambda_t := \{tw + m : w,m\in\mathbb{Z}^d\}.
		$$
		
		For irrational $t$, the set $\{tw \bmod \mathbb{Z}^d : w\in\mathbb{Z}^d\}$ is dense
		in the torus $\mathbb{R}^d/\mathbb{Z}^d$, hence $\Lambda_t$ is dense in $\mathbb{R}^d$.
		
		Let $D := D_t(P)\cup D_t(Q)$, where $D_t(P)$ and $D_t(Q)$ are as in Lemma
		\ref{lem:Ft-basic}. Then $\lambda(D)=0$, and on the open set
		$$
		U := \mathbb{R}^d \setminus D
		$$
		both $F_{t,P}$ and $F_{t,Q}$ are constant on each connected component
		of $U$ (again by Lemma~\ref{lem:Ft-basic}). The components of $U$ are
		open and $\Lambda_t$ is dense, so $\Lambda_t$ intersects each component.
		Therefore $F_{t,P}$ and $F_{t,Q}$ take the same constant value on each
		component of $U$, and we conclude
		$$
		F_{t,P}(x) = F_{t,Q}(x) \quad \text{for almost every } x\in\mathbb{R}^d.
		$$
		
		Since $F_{t,P}=F_{t,Q}$ almost everywhere on $\mathbb{R}^d$ and both are integrable 
		on the torus
		and $\mathbb{Z}^d$-periodic, their Fourier coefficients on the torus coincide.
		For all $\xi\in\mathbb{Z}^d$,
		\begin{equation}\label{eq:coeff-equality}
			\int_{[0,1]^d} F_{t,P}(x)\,e^{-2\pi i\langle \xi,x\rangle}\,dx
			=
			\int_{[0,1]^d} F_{t,Q}(x)\,e^{-2\pi i\langle \xi,x\rangle}\,dx.
		\end{equation}
		
		We now compute the left-hand side in terms of $\widehat{1_P}$. First note that
		$F_{t,P}$ is integer-valued and bounded on $[0,1]^d$, thus $F_{t,P}\in L^1([0,1]^d)$ and the integrals converge absolutely.
		
		Expanding $F_{t,P}$ and interchanging sum and integral (because the term $1_{tP}(n-x)=0$ for all but finitely many $n$), we obtain
		\begin{align*}
			\int_{[0,1]^d} F_{t,P}(x)e^{-2\pi i\langle \xi,x\rangle}\,dx
			&=
			\int_{[0,1]^d}
			\sum_{n\in\mathbb{Z}^d} 1_{tP}(n-x)\,e^{-2\pi i\langle \xi,x\rangle}\,dx \\
			&=
			\sum_{n\in\mathbb{Z}^d}
			\int_{[0,1]^d} 1_{tP}(n-x)\,e^{-2\pi i\langle \xi,x\rangle}\,dx \\
			&=
			\sum_{n\in\mathbb{Z}^d}
			\int_{n-[0,1]^d} 1_{tP}(y)e^{2\pi i\langle \xi,y\rangle}\,dy \\
			&=
			\int_{\mathbb{R}^d} 1_{tP}(y)e^{2\pi i\langle \xi,y\rangle}\,dy
			=
			\widehat{1_{tP}}(-\xi),
		\end{align*}
		where we substitute $y:=n-x$ and note that $e^{-2\pi i\langle \xi,x\rangle}=e^{2\pi i\langle \xi,n-x\rangle}$ while $\{n-[0,1]^d : n\in\mathbb{Z}^d\}$ tiles $\mathbb{R}^d$.
		
		By the scaling rule for the Fourier transform,
		$
		\widehat{1_{tP}}(-\xi) = t^d\,\widehat{1_P}(-t\xi)
		$.
		Combining this with \eqref{eq:coeff-equality} yields, for our fixed irrational $t>0$,
		$$
		\widehat{1_P}(-t\xi) = \widehat{1_Q}(-t\xi)
		\quad\text{for all } \xi\in\mathbb{Z}^d.
		$$
		
		Since \eqref{eq:hypothesis} holds for all $t>0$, the same argument applies to every
		irrational $t>0$. Let
		$$
		S := \{t\xi : t\in\mathbb{R}\setminus\mathbb{Q},\,\xi\in\mathbb{Z}^d\}.
		$$
		
		Because the set of rational directions $\xi/||\xi||$ for non-zero $\xi \in \mathbb{Z}^d$ is dense in the unit sphere, and the set of irrational multiples $t$ is dense in $\mathbb{R}_{>0}$, the set $S$ forms a dense collection of rays. We conclude that $S$ is dense in $\mathbb{R}^d$.
		
		
		We have $\widehat{1_P}=\widehat{1_Q}$
		on the dense set $S$. As $1_P,1_Q\in L^1(\mathbb{R}^d)$, their Fourier
		transforms are continuous, so $\widehat{1_P}=\widehat{1_Q}$ on all of $\mathbb{R}^d$.
		By the uniqueness of the Fourier transform on $L^1(\mathbb{R}^d)$, this implies $1_P=1_Q$ almost everywhere.
	\end{proof}
	
	\section{Convex bodies and polytopes}
	
	Theorem \ref{thm:analytic} establishes that the indicator functions of the sets coincide almost everywhere in the sense of Lebesgue measure. To elevate this analytic equivalence to strict set equality, we now restrict our attention to convex bodies. Recall that a convex body is a compact convex set with a nonempty interior.
	
	\begin{theorem}\label{thm:convex}
		Let $P,Q\subset\mathbb{R}^d$ be convex bodies. Assume
		$$
		L_{P+w}(t) = L_{Q+w}(t)
		\quad\text{for all } w\in\mathbb{Z}^d,\ t>0.
		$$
		Then $P=Q$.
	\end{theorem}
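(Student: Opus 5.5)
The plan is to reduce Theorem~\ref{thm:convex} to Theorem~\ref{thm:analytic} and then upgrade almost-everywhere equality of indicators to equality of sets, using the regularity of convex bodies.

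\textbf{Step 1: the boundary has measure zero.} A convex body $K$ is bounded and measurable. Its boundary $\partial K$ is Lebesgue-null. One quick argument: translate so that $0\in\operatorname{int}K$. Then for every $s>1$ we have $K\subset sK^\circ$, where $K^\circ$ denotes the interior (this is the standard fact that $\lambda x\in\operatorname{int}K$ for $x\in K$ and $0\le\lambda<1$). Hence $\partial K\subset sK\setminus K^\circ$, so
$$\lambda(\partial K)\le (s^d-1)\lambda(K)\to 0 \quad\text{as } s\to1^+.$$
Thus both $P$ and $Q$ satisfy the hypotheses of Theorem~\ref{thm:analytic}, and we conclude $1_P=1_Q$ almost everywhere, i.e.\ $\lambda(P\triangle Q)=0$.

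\textbf{Step 2: null symmetric difference forces equality.} This is the main (though mild) obstacle, since a.e.\ statements alone do not see individual points. The key fact is that a convex body is the closure of its interior: for $x\in K$ and a fixed interior point $y$, the points $(1-\lambda)x+\lambda y$ with $0<\lambda\le1$ lie in $\operatorname{int}K$ and converge to $x$ as $\lambda\to0$.

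Now suppose $x\in P\setminus Q$. Since $Q$ is compact, there is a ball $B(x,r)$ disjoint from $Q$. Since $x\in P=\overline{\operatorname{int}P}$, the open set $B(x,r)\cap\operatorname{int}P$ is nonempty, so it has positive measure. It is contained in $P\setminus Q$, which contradicts $\lambda(P\triangle Q)=0$. Hence $P\subset Q$, and by symmetry $Q\subset P$, so $P=Q$.

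Everything else is routine. The only points needing care are the two convexity facts used: the null boundary in Step~1 and $K=\overline{\operatorname{int}K}$ in Step~2. I would state both briefly with the short justifications above rather than citing them.
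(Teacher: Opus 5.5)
Your proof is correct and follows the paper's route. You apply Theorem~\ref{thm:analytic} to get $\lambda(P\triangle Q)=0$, then combine closedness of $Q$, $P=\overline{\operatorname{int}P}$, and positivity of the measure of nonempty open sets; your pointwise argument is the paper's $\operatorname{int}P\subset\operatorname{int}Q$ argument unpacked, and you also justify the null-boundary fact that the paper only asserts.

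One small slip in Step 1: $\lambda(sK\setminus K^\circ)=s^d\lambda(K)-\lambda(K^\circ)$ is bounded by $(s^d-1)\lambda(K)$ only if you already know $\lambda(K^\circ)=\lambda(K)$, which is circular. Instead use your own inclusion $K\subset sK^\circ$ to get $\partial K\subset sK^\circ\setminus K^\circ$, a set of measure $(s^d-1)\lambda(K^\circ)\le(s^d-1)\lambda(K)$.
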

	
	\begin{proof}
		Convex bodies are bounded measurable sets whose boundaries have Lebesgue measure zero so we can apply Theorem~\ref{thm:analytic} and conclude that $1_P = 1_Q$ almost everywhere. Therefore, the set difference $P \triangle Q$ has measure zero, i.e.
		$$
		\lambda(P\triangle Q) = 0.
		$$
		
		We first show that $\operatorname{int} P \subset \operatorname{int} Q$.
		Suppose for the sake of contradiction that $\operatorname{int} P \not\subset \operatorname{int} Q$. Recall that the interior of $Q$ is the largest open set contained in $Q$. Since $\operatorname{int} P$ is an open set, if it were entirely contained in $Q$, it would necessarily be a subset of $\operatorname{int} Q$. Thus, our assumption implies that $\operatorname{int} P$ is not entirely contained in $Q$.
		
		Therefore, the intersection $\operatorname{int} P \setminus Q$ is non-empty. Since $\operatorname{int} P$ is open and $Q$ is closed, $\operatorname{int} P \setminus Q$ is a non-empty open set, which means it has strictly positive Lebesgue measure. This implies
		$$
		\lambda(P \setminus Q) \ge \lambda(\operatorname{int} P \setminus Q) > 0,
		$$
		which directly contradicts $\lambda(P\triangle Q) = 0$.
		
		Thus, we must have $\operatorname{int} P \subset \operatorname{int} Q$. By symmetry, $\operatorname{int} Q \subset \operatorname{int} P$, hence $\operatorname{int} P = \operatorname{int} Q$.	
		
		For a convex body $K$, one has $K = \overline{\operatorname{int} K}$. Hence
		$$
		P = \overline{\operatorname{int} P} = \overline{\operatorname{int} Q} = Q,
		$$
		as claimed.
	\end{proof}
	
	\begin{remark} \label{rem:polytopes}
		Polytopes are convex bodies, so Theorem~\ref{thm:convex} applies in particular
		to arbitrary (possibly non-rational) convex polytopes. In the terminology of
		\cite{RoyerReconstruction,RoyerSymmetric}, this settles Conjecture~18 of
		\cite{RoyerReconstruction} (for full-dimensional real polytopes) and
		Conjecture~9 of \cite{RoyerSymmetric} (for convex bodies).
	\end{remark}
	
	We finish by mentioning a related open problem. While Theorem~\ref{thm:convex} requires the equality of the lattice-point enumerators for all integer translates $w \in \mathbb{Z}^d$, a natural question is whether a finite set of carefully chosen translations is sufficient. It remains an open problem whether given polytopes $P$ and $Q$ there exists a finite set of vectors $W \subset \mathbb{Z}^d$ (which may depend on $P$ and $Q$) such that $L_{P+w}(t) = L_{Q+w}(t)$ for all $w \in W$ and all $t > 0$ implies $P = Q$. This corresponds to Conjecture 7.28 in \cite{RobinsBook}. The purely Fourier-analytic methods presented here rely on the dense periodic evaluations provided by the full integer lattice, meaning the finite-translate problem likely requires a more refined approach.
	
	\medskip
	
	\noindent\textbf{Acknowledgments.}
	
	The author would like to thank Sinai Robins for suggesting the study of this problem, as well as many insightful discussions.
	
	\noindent\textbf{AI Disclosure.}
	
	The author made use of an AI assistant during the development of this work.
	
	\bibliographystyle{alpha}
	\bibliography{refs}
	
\end{document}